\documentclass{amsart}[12pt]
\def\doctype{}

\usepackage{latexsym,amssymb,dsfont}
\usepackage{tikz}
\usetikzlibrary { decorations.pathmorphing, decorations.pathreplacing, decorations.shapes, }
\usepackage{color}
\usepackage{fancyhdr}
\usepackage{hyperref}
\hypersetup{
colorlinks=true,
citecolor=blue,
linkcolor=blue,
  urlcolor=blue
}
\newcommand\F{\mathbb{F}}
\newcommand\N{\mathbb{N}}
\newcommand\Z{\mathbb{Z}}

\newcommand\R{\mathbb{R}}

\newcommand{\comment}[1]{}

\numberwithin{equation}{section}

\fancypagestyle{titlepage}{
\fancyhead[R]{\doctype}
\fancyhead[CL]{}
\cfoot{\vspace{5pt} \thepage}
}

\let\oldsection\section
\newcommand\boldsection[1]{\oldsection{\bf #1}}
\newcommand\starsection[1]{\oldsection*{\bf #1}}
\makeatletter
\renewcommand\section{\@ifstar\starsection\boldsection}
\makeatother

\newtheoremstyle{theorem}
  {12pt}		  % space above
  {0pt}  % space below
  {\sl}  % bofy font
  {\parindent}     % ident - empty=no indent,  \parindent= paragraph indent
  {\bf}  % thm head font
  {. }    % punctuation after thm head
  { }    % space after thm head: `` ``=normal \newline=linebreak
  {}     % thm head specification
\theoremstyle{theorem}
\newtheorem{thm}{Theorem}[section]  % 1st argument is your name for it
\newtheorem{prop}[thm]{Proposition}

\newtheoremstyle{definition}
  {12pt}		  % space above
  {0pt}  % space below
  {}  % bofy font
  {\parindent}     % ident - empty=no indent,  \parindent= paragraph indent
  {\bf}  % thm head font
  {. }    % punctuation after thm head
  { }    % space after thm head: `` ``=normal \newline=linebreak
  {}     % thm head specification
\theoremstyle{definition}

\renewcommand{\proofname}{Proof}

\makeatletter
\renewenvironment{proof}[1][\proofname]{\par
  \pushQED{\qed}%
  \normalfont \partopsep=\z@skip \topsep=\z@skip
  \trivlist
  \item[\hskip\labelsep
        \scshape
    #1\@addpunct{.}]\ignorespaces
}{%
  \popQED\endtrivlist\@endpefalse
}
\makeatother

\makeatletter
\renewcommand*\@maketitle{%
  \normalfont\normalsize
  \@adminfootnotes
  \@mkboth{\@nx\shortauthors}{\@nx\shorttitle}%
  \global\topskip42\p@\relax % 5.5pc   "   "   "     "     "
  \@settitle
  \ifx\@empty\authors \else {\vskip 1em
\vtop{\centering\shortauthors\@@par}} \fi
  \ifx\@empty\@date \else {\vskip 1em \vtop{\centering\@date\@@par}}\fi % MYCHANGE
  \ifx\@empty\@dedicatory
  \else
    \baselineskip18\p@
    \vtop{\centering{\footnotesize\itshape\@dedicatory\@@par}%
      \global\dimen@i\prevdepth}\prevdepth\dimen@i
  \fi
  \@setabstract
  \normalsize
  \if@titlepage
    \newpage
  \else
    \dimen@34\p@ \advance\dimen@-\baselineskip
    \vskip\dimen@\relax
  \fi
} % end \@maketitle
\renewcommand*\@adminfootnotes{%
  \let\@makefnmark\relax  \let\@thefnmark\relax
  \ifx\@empty\@subjclass\else \@footnotetext{\@setsubjclass}\fi
  \ifx\@empty\@keywords\else \@footnotetext{\@setkeywords}\fi
  \ifx\@empty\thankses\else \@footnotetext{%
    \def\par{\let\par\@par}\@setthanks}%
  \fi
\thispagestyle{titlepage}
}
\makeatother

\makeatletter \let\c@table\c@figure \makeatother

\begin{document}

\title[]{\large The deviation from right angles in $k$-subsets of\\ points in the plane}

\author{Peter J.~Dukes}
\address{
Mathematics and Statistics,
University of Victoria, Victoria, BC, Canada
}
\email{dukes@uvic.ca}

\date{\today}

\begin{abstract}
A problem originating with Erd\H{o}s and Silverman in the 1970s asks for the minimum integer $r(k)$ such that any set of $n \ge r(k)$ points in the plane has some $k$-subset with no right angles.  The case $k=4$ has an interesting gap between the known bounds, namely $8 \le r(4) \le 10$.  Here, we consider a relaxation that quantifies the deviation from right angles.  Specifically, we study $\Gamma_k(n)$, the supremum of angles $\gamma$ such that every $n$-set of points in $\R^2$ has a $k$-subset with all angles outside of the interval $90^\circ \pm \gamma$.  We show that $4^\circ \le \Gamma_4(10) \le 9.292^\circ$.  For large $n$, the quantity $\Gamma_3(n)$ is closely related to a classical minimax angle problem pioneered by Blumenthal, Erd\H{o}s and Szekeres.  We give bounds on $\Gamma_k(n)$ for a general $k$ and large $n$.
\end{abstract}

\maketitle
\hrule

\bigskip

\section{Introduction}
\label{sec:intro}

In \cite{Erdos77}, Paul Erd\H{o}s discusses the following problem which he attributes to Silverman. {\sl What is the largest number $f(n)$ such that any set of $n$ points in the plane contain a subset of size $f(n)$, no three points of which determine a right angle?}  It is noted that an $n \times n$ grid of points shows $f(n^2) \le 2n-2$.  Some later work on this problem appears in \cite{Abbott80,AH75,Elekes91,GPRS95}.

The following `inverse' version of the problem is mentioned in \cite{FHHM02} and attributed to Andy C.~Liu: {\sl Determine the smallest positive integer $r(k)$ such that, in any collection of $r(k)$ distinct points in the plane, some $k$-subset contains no right angles.} We could not find this exact version of the problem studied elsewhere in the literature. An elegant application of the Erd\H{o}s-Szekeres theorem \cite{ES35} on monotone subsequences
gives the bound $r(k) \le (k-1)^2+1$.  To see this, let $n = (k-1)^2+1$ and choose coordinate axes for the plane such that there are no repeated $x$-coordinates nor $y$-coordinates.  By choice of $n$, there is either an increasing or decreasing $k$-term subsequence of $y$-coordinates from left to right, and hence these points cannot define any right angles.  More details for the argument are discussed in the proof of Proposition~\ref{prop:lb10} to follow.

The paper \cite{FHHM02} identifies $k=4$ as a particularly interesting first open case.  It is known that $8 \le r(4) \le 10$. The upper bound is exactly as above.  For the lower bound, consider the set of seven lattice points $\{0,1,2\}^2 \setminus \{(0,2),(2,2)\}$.  It is easy to see that any $4$-subset of these contains at least one right angle.

In this note, we introduce a parameter which captures the deviation from right angles.  Define 
$$\Gamma_k(n) = \sup\{\gamma : \forall~ S \subset \R^2, |S|=n, \exists~ T \subseteq S, |T|=k, 
|\angle ABC-90^\circ| \ge \gamma ~\forall \text{ distinct } A,B,C \in T\}.$$
For extra clarity, a lower bound $\Gamma_k(n) \ge \gamma$ asserts that every $n$-point configuration in the plane contains a $k$-subset having all angles at least $\gamma$ away from a right angle.  An upper bound $\Gamma_k(n) < \gamma'$ arises if there exists a configuration of $n$ points for which every $k$-subset has some angle within $\gamma'$ of a right angle. We note that, for fixed $k$, $\Gamma_k(n)$ is a nondecreasing function of $n$.
Our primary focus is on the case $k=4$.

From the same $7$-point configuration as above it follows that $\Gamma_4(n)=0$ for $n \le 7$. But we additionally have 
$\Gamma_4(n)=0$ for $n \in \{8,9\}$ by taking the set $S_N=\{0,N,2N\} \times \{0,1,2\}$ for very large $N$.  In any $4$-subset $T\subset S_N$, at least two points $A,B\in T$ share the same $x$-coordinate, and least one $C \in T$ has a different $x$-coordinate from $A$ and $B$. From $\|\vec{BC}\| \ge N$, $\|\vec{AB}\| \in \{1,2\}$ and $|\vec{AB} \cdot \vec{BC}| \le 4$, it follows that $|\cos \angle ABC| \le 4/N$.  This implies 
$\Gamma_4(9)$ is less than every positive real number, and hence $\Gamma_4(9) = 0$.  Of course, the same conclusion holds for $\Gamma_4(8)$.

As we see in the next section, the behavior changes at $n=10$.  We show that $\Gamma_4(10)$ is bounded away from $0$, and we exhibit a configuration of points that gives a reasonable upper bound on $\Gamma_4(10)$.  In Section 3, we note the unsurprising fact that $k$-subsets with all angles nearly degenerate are unavoidable for $n \gg k$.
That is, $\Gamma_k(n)$ tends to $90^\circ$ for large $n$, and we examine the rate of approach.  
Finally, in the last section, we briefly discuss some possible next steps and new directions for future research related to the problem.

\section{The case $k=4$ and $n=10$}
\label{sec:n10}

In this section, we give bounds on $\Gamma_4(10)$. Our lower bound essentially follows from the same monotone subsequence trick as mentioned earlier.

\begin{prop}
\label{prop:lb10}
$\Gamma_4(10) \ge 4^\circ$.
\end{prop}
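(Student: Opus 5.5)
The plan is to combine the Erd\H{o}s--Szekeres argument from the introduction with a careful choice of the coordinate frame, so that the monotone $4$-subset it produces has all its angles bounded away from $90^\circ$. Fix an $n=10$ point set $S$. Each of the $\binom{10}{2}=45$ pairs of points spans a line, and we record its direction modulo $90^\circ$, since a line direction and its perpendicular give the same pair of axes. This gives at most $45$ points on a circle of length $90^\circ$. We then rotate the axes so that the $x$-axis direction $\theta$ lies in a gap of this set: every pair direction taken mod $90^\circ$ lies in $[\theta+a,\theta+90^\circ-b]$ for some $a,b>0$ with $a+b=g$, the length of the gap. In particular no two points share an $x$- or $y$-coordinate. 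Since $10=(4-1)^2+1$, Erd\H{o}s--Szekeres gives four points $T$ whose $y$-coordinates are monotone from left to right. By reflecting we may assume they are increasing, so every segment in $T$ has direction in $(\theta,\theta+90^\circ)$ mod $180^\circ$.

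Next we estimate the angles of $T$. Take three points of $T$ with vertex $B$, and split into two cases.

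\emph{Case 1: $A$ and $C$ lie on the same side of $B$ in the $x$-order.} The rays $BA$ and $BC$ then have directions $d_1<d_2$ in the same open quadrant. The angle equals $d_2-d_1$, so its deviation from $90^\circ$ is
$$90^\circ-(d_2-d_1)=(d_1-\theta)+(\theta+90^\circ-d_2)\ge a+b=g.$$

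\emph{Case 2: $B$ lies between $A$ and $C$.} One ray points into the first quadrant and the other into the third. The angle is $180^\circ-(d_2-d_1)$, and the same computation shows it exceeds $90^\circ$ by at least $g$.

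So every angle of $T$ deviates from $90^\circ$ by at least $g$, and it remains to show that some gap has $g\ge 4^\circ$.

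This last step is the main obstacle. Naive pigeonhole on $45$ points in a circle of length $90^\circ$ only guarantees $g\ge 2^\circ$, which would give $\Gamma_4(10)\ge 2^\circ$. To reach $4^\circ$ I would first try to use that Case 1 and Case 2 do not need the full gap. In each case only the two specific directions $d_1,d_2$ at the vertex $B$ matter. The deviation is $(d_1-\theta)+(\theta+90^\circ-d_2)$, which is at least the sum of the two distances from $\theta$ to the nearest pair directions on either side, and this can be larger than the gap length when those nearest directions are not the ones actually used.

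Second, I would exploit freedom in choosing $T$. For every admissible $\theta$ some monotone $4$-subset exists, so it suffices to find one $\theta$ for which the particular $T$ avoids the at most $2$ directions nearest $\theta$ at each end. One way to set this up is an averaging or measure argument over $\theta\in[0^\circ,90^\circ)$. For a fixed triple $(A,B,C)$, its angle can be within $4^\circ$ of right only if both $BA$ and $BC$ have directions mod $90^\circ$ within $4^\circ$ total of $\theta$ from opposite sides. Each such event restricts $\theta$ to an interval of length at most $4^\circ$ determined by that triple. I would then try to show that these bad intervals cannot cover every $\theta$ for which a monotone subset exists.

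If the covering count is too weak, a fallback is to use the relation $d(BA)-d(BC)=\angle ABC$ between pair directions. This forces any $45$ directions that are uniformly spread to produce many exactly compatible directions, and in those configurations one can choose a different monotone $4$-subset.
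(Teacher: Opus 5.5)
\textbf{Verdict: genuine gap.} Your proposal proves $\Gamma_4(10)\ge 2^\circ$, not $4^\circ$, and the paper's own argument does not close the difference either.

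Your reduction is correct as far as it goes. A gap of length $g$ in the directions taken mod $90^\circ$, with the axis $\theta$ inside it, gives every angle of the monotone $4$-set a deviation of at least $g$. Your reflection to the increasing case is legitimate: reflecting in the axis sends a mod-$90^\circ$ gap containing $\theta$ to another gap of the same length containing $\theta$. But pigeonhole only gives $g\ge 2^\circ$, and the step from $2^\circ$ to $4^\circ$ is missing. None of your suggested repairs supplies it:
\begin{itemize}
\item The remark that only the two directions at the vertex matter cannot be used, because you have no control over which monotone $4$-set Erd\H{o}s--Szekeres returns for a given $\theta$.
\item The measure argument fails on counting grounds. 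Up to $3\binom{10}{3}=360$ angles each contribute a bad set of $\theta$ of length up to $4^\circ$, and nothing prevents their union from covering $[0^\circ,90^\circ)$.
\item The fallback is not an argument.
\end{itemize}
More fundamentally, chain arguments that use only how the $45$ directions are distributed cannot beat $2^\circ$. A more flexible version would cover the directions by two arcs of length at most $90^\circ-\gamma$ and use the two cone orders to find a $4$-set with all directions in one arc. Even that gives only $\gamma\le 2^\circ$ when the directions are evenly spaced mod $180^\circ$: one arc must contain $23$ of them, so it has length at least $88^\circ$.

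The paper's route does not fill this gap. It pigeonholes the directions mod $180^\circ$, getting an empty interval $[0^\circ,4^\circ]$, so an increasing chain has all segment directions in $(4^\circ,90^\circ]$ and middle angles above $94^\circ$. For a decreasing chain it reflects $S$ in the line at $2^\circ$, claiming this keeps $[0^\circ,4^\circ]$ empty and interchanges increasing and decreasing. Your case analysis shows why this fails:
\begin{itemize}
\item That gap gives the quadrant $(0^\circ,90^\circ)$ a $4^\circ$ margin but gives $(90^\circ,180^\circ)$ none.
\item The reflection acts on line directions by $\phi\mapsto 4^\circ-\phi$. It sends $(90^\circ,180^\circ)$ onto $(4^\circ,94^\circ)$, not into $(4^\circ,90^\circ)$.
\item Being an isometry, it cannot repair a decreasing chain containing a near-right angle. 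An example is a middle vertex whose two segments have directions just below $180^\circ$ and just above $90^\circ$.
\item Re-running Erd\H{o}s--Szekeres on the reflected set can again yield only decreasing chains. If all $45$ directions lie in $(90^\circ,94^\circ)$, both $S$ and its reflection are entirely decreasing, so the asserted increasing subsequence need not exist.
\end{itemize}
With a single $4^\circ$ gap mod $180^\circ$, the best choice is the axis at its centre, which gives each quadrant a $2^\circ$ margin. That is your bound again. Reaching $4^\circ$ needs a genuinely geometric ingredient, for example showing that $10$ points cannot have nearly evenly spaced directions, or using good $4$-sets that are not chains.
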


\begin{proof}
Let $S \subset \R^2$ with $|S|=10$.  There are $\binom{10}{2}$ segments determined by pairs of points in $S$.  The angles of these segments relative to (say) the positive $x$-axis can be taken as elements of $[0,180^\circ)$.  Since $180^\circ/\binom{10}{2}=4^\circ$, there exists some interval $[\alpha,\alpha+4^\circ]$ containing no angle of a segment in $S$.  After applying a rotation, we may assume there are no segments in $S$ at an angle in $[0,4^\circ]$.  Suppose then that $S=\{P_1,\dots,P_{10}\}$, where $P_i=(x_i,y_i)$ and $x_1\le \cdots \le x_{10}$.  From the sequence $y_1,\dots,y_{10}$ there exists a strictly monotone subsequence $y_{i_1}, y_{i_2}, y_{i_3}, y_{i_4}$ of length $4$.  Suppose this subsequence is increasing. Then each of the angles $\angle P_{i_1}P_{i_2} P_{i_3}$, $\angle P_{i_1}P_{i_2} P_{i_4}$, $\angle P_{i_1}P_{i_3} P_{i_4}$, $\angle P_{i_2}P_{i_3} P_{i_4}$ is greater than $94^\circ$ since the $x$-coordinates are in order.  It follows that all twelve angles in this $4$-subset are outside of $90^\circ \pm 4^\circ$.
On the other hand, if the subsequence $y_{i_1}, y_{i_2}, y_{i_3}, y_{i_4}$ is decreasing, we may apply a reflection to $S$ in the line at $2^\circ$ to the positive $x$-axis.  This preserves all angles in $S$, avoids the same interval $[0,4^\circ]$ and interchanges the roles of increasing and decreasing sequences.  So we again have an increasing $4$-term subsequence in the reflected configuration and no angle within $4^\circ$ of a right angle.
\end{proof}

Our upper bound results from a computer search.  First, we implemented a simulated annealing process on a  lattice grid $\{1,\dots,N\}^2$ to get several instances of deviations around $10^\circ$.  Then, for those configurations, we applied a gradient descent perturbation to reduce the bound slightly.

\begin{prop}
\label{prop:ub10}
$\Gamma_4(10) \le 9.292^\circ$.
\end{prop}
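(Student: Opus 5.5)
The upper bound is witnessed by a single explicit configuration, so the plan is to exhibit a set $S=\{P_1,\dots,P_{10}\}\subset\R^2$ and certify it. By the definition of $\Gamma_4$, it suffices to check that every one of the $\binom{10}{4}=210$ four-subsets $T\subseteq S$ contains three distinct points $A,B,C$ with $|\angle ABC-90^\circ|<\gamma'$ for some $\gamma'$ slightly above $9.292^\circ$. Then no $\gamma\ge\gamma'$ belongs to the set whose supremum defines $\Gamma_4(10)$. Since $\gamma'$ can be taken arbitrarily close to the maximum deviation realized by the configuration, it is cleanest to show directly that
$$\max_{T}\ \min_{A,B,C\in T}|\angle ABC-90^\circ|\le 9.292^\circ .$$
Each $4$-subset with its bad angle then certifies that nothing larger than $9.292^\circ$ is achievable against $S$.

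To find the configuration, I would follow the search described just before the statement. First I run simulated annealing on lattice points in $\{1,\dots,N\}^2$, scoring a configuration by the worst $4$-subset, meaning the subset whose smallest deviation from $90^\circ$ is largest. This should give configurations around $10^\circ$. A natural starting point is a perturbation of the $3\times 3$ grid $S_N$ with one extra point: the analysis of $S_N$ shows that $3\times3$ grid structure forces near-right angles, and the tenth point is what breaks this. Next I refine with gradient descent, or a subgradient method on the max-min objective, perturbing the real coordinates to push the worst $4$-subset's deviation down to about $9.29^\circ$. At the optimum several $4$-subsets should be simultaneously tight.

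For the verification I would record the final coordinates, preferably rational or rounded to a few decimals so the check is reproducible, and rely on $|\angle ABC - 90^\circ| <\gamma$ being equivalent to $|\cos\angle ABC|<\sin\gamma$. That is,
$$|\vec{BA}\cdot\vec{BC}| < \sin\gamma\,\|\vec{BA}\|\,\|\vec{BC}\|,$$
which can be checked in exact arithmetic after squaring, with $\sin^2(9.292^\circ)$ bounded by a rational. For each of the 210 subsets I list one witnessing triple, as a table or as a computer certificate.

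The main obstacle is the search itself. The objective is a nonsmooth max-min over 210 subsets and 12 angles per subset, so the landscape has many local optima and gradient steps only improve the currently active constraints. The verification, by contrast, is routine once the coordinates are fixed. One care point is rounding: after rounding the optimized coordinates, the bound must still hold at $9.292^\circ$. I would handle this by stating the bound with a small margin above the numerically optimal value.
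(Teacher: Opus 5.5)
Your reduction is correct, and it is the route the paper takes. The paper exhibits one explicit ten-point set, found by simulated annealing on a lattice and then refined by gradient-descent perturbation. It certifies this set with a table that assigns to each of the $\binom{10}{4}=210$ quadruples one angle deviating from $90^\circ$ by at most $9.292^\circ$. This gives $\max_T\min_{A,B,C\in T}|\angle ABC-90^\circ|\le 9.292^\circ$, and hence the bound.

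\textbf{Gap: the configuration is never given.} For an existence statement of this kind, the witness \emph{is} the proof. Your sentences ``this should give configurations around $10^\circ$'' and ``push the worst $4$-subset's deviation down to about $9.29^\circ$'' restate the claim as the hoped-for outcome of a heuristic search. Nothing in the plan shows that a search reaches $9.292^\circ$ rather than stalling at $9.5^\circ$ or $10^\circ$. The paper itself reports that annealing alone gave only about $10^\circ$. The specific constant $9.292$ can only be justified by specific points.

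To close the gap, you must write down ten points, for example the paper's $P_0,\dots,P_9$. You also need the covering certificate: a list of angles such as $\angle P_2P_1P_6$, $\angle P_0P_1P_8,\dots$, each with the fourth points it handles, which together exhaust all $210$ quadruples. Your concern about rounding is well placed, because the margin is tiny. The worst entries in the paper's table, e.g.\ $\angle P_2P_7P_9$, $\angle P_0P_5P_6$ and $\angle P_1P_0P_3$, are about $9.2919^\circ$. The paper reports these deviations only approximately. Once the coordinates are fixed, your exact check $(\vec{BA}\cdot\vec{BC})^2\le r\,\|\vec{BA}\|^2\|\vec{BC}\|^2$, with a rational $r\le\sin^2(9.292^\circ)$, would make the certificate fully rigorous.
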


\begin{proof}
Consider the following set of $10$ points in the plane, displayed in Figure~\ref{fig:config10}. 
$$\begin{array}{llllll}
P_0 &= (5.001213, & 67.864232)&
P_1 &= (5.00126, & 68.653515)\\
P_2 &= (12.840744, & 78.993522)&
P_3 &= (28.03804, & 64.093769)\\
P_4 &= (29.996837, & 69.918767)&
P_5 &= (32.357229, & 39.866702)\\
P_6 &= (37.862434, & 43.727817)&
P_7 &= (91.164903, & 82.745474)\\
P_8 &= (94.840088, & 68.559448)&
P_9 &= (95.819369, & 60.912308)
\end{array}$$

\begin{center}
\begin{figure}[htbp]
\begin{tikzpicture}[scale=0.1]
\node (a0) at (5.001213, 67.864232) {};
\node (a1) at (5.00126, 68.653515) {};
\node (a2) at (12.840744, 78.993522) {};
\node (a3) at (28.03804, 64.093769) {};
\node (a4) at (29.996837, 69.918767) {};
\node (a5) at (32.357229, 39.866702) {};
\node (a6) at (37.862434, 43.727817) {};
\node (a7) at (91.164903, 82.745474) {};
\node (a8) at (94.840088, 68.559448) {};
\node (a9) at (95.819369, 60.912308) {};

%chains
%279
%73650
%71034215
\draw[dashed] (a7)--(a3)--(a6)--(a5)--(a0);
\draw[dotted] (a0)--(a3)--(a4)--(a2)--(a1);

\filldraw (a0) circle [radius=1];
\filldraw (a1) circle [radius=1];
\filldraw (a2) circle [radius=1];
\filldraw (a3) circle [radius=1];
\filldraw (a4) circle [radius=1];
\filldraw (a5) circle [radius=1];
\filldraw (a6) circle [radius=1];
\filldraw (a7) circle [radius=1];
\filldraw (a8) circle [radius=1];
\filldraw (a9) circle [radius=1];

\node at (4.5,64) {$P_0$};
\node at (4.5,72) {$P_1$};
\node[left] at (a2) {$P_2$};
\node[right] at (a3) {$P_3$};
\node[right] at (a4) {$P_4$};
\node[left] at (a5) {$P_5$};
\node[right] at (a6) {$P_6$};
\node[right] at (a7) {$P_7$};
\node[right] at (a8) {$P_8$};
\node[right] at (a9) {$P_9$};
\end{tikzpicture}
\caption{Arrangement of points achieving $\Gamma_4(10) \lessapprox 9.292$}
\label{fig:config10}
\end{figure}
\end{center}

\begin{center}
\begin{table}
\caption{Table of near-right angles, deviations, and fourth points of quadruples}
\label{tab:angles}
$\begin{array}{lll|lll}
\text{angle} & \gamma \lessapprox & \text{indices of fourth points} & \text{angle} & \gamma \lessapprox & \text{indices of fourth points}\\
\hline
\angle P_2 P_1 P_6  &  0.0126^\circ & 0, 3, 4, 5, 7, 8, 9 &	\angle P_4 P_6 P_8  &  6.8323^\circ & 0, 1, 2\\
\angle P_0 P_1 P_8  &  0.0566^\circ & 2, 3, 4, 5, 6, 7, 9 &	\angle P_0 P_8 P_9  &  6.8542^\circ & 2, 4, 5, 6\\
\angle P_4 P_7 P_9  &  0.1913^\circ & 0, 1, 2, 3, 5, 6, 8 &	\angle P_1 P_2 P_3  &  7.2653^\circ & 0, 4, 5, 7, 8, 9\\
\angle P_4 P_6 P_9  &  0.2006^\circ & 0, 1, 2, 3, 5, 8 &	\angle P_0 P_2 P_4  &  7.2844^\circ & 3, 7, 8, 9\\
\angle P_0 P_4 P_5  &  0.2080^\circ & 1, 2, 3, 6, 7, 8, 9 &	\angle P_5 P_4 P_7  &  7.3522^\circ & 2\\
\angle P_2 P_6 P_5  &  0.3123^\circ & 0, 3, 4, 7, 8, 9 &	\angle P_1 P_8 P_9  &  7.3575^\circ & 2, 4, 5, 6\\
\angle P_2 P_6 P_7  &  0.8479^\circ & 0, 3, 4, 8, 9 &	\angle P_1 P_3 P_4  &  7.3905^\circ & 6, 7, 8, 9\\
\angle P_2 P_0 P_6  &  1.1361^\circ & 3, 4, 8, 9 &	\angle P_1 P_5 P_7  &  7.4430^\circ & 0, 2, 6\\
\angle P_1 P_4 P_5  &  1.5933^\circ & 2, 3, 6, 7, 8, 9 &	\angle P_2 P_5 P_9  &  8.1633^\circ & 0, 1, 3, 4\\
\angle P_2 P_5 P_8  &  1.8450^\circ & 0, 1, 3, 4, 7, 9 &	\angle P_3 P_5 P_9  &  8.2384^\circ & 0, 1, 4, 6\\
\angle P_3 P_7 P_8  &  1.9362^\circ & 0, 1, 2, 4, 5, 6, 9 &	\angle P_0 P_5 P_7  &  8.2389^\circ & 2, 6\\
\angle P_3 P_6 P_8  &  2.2042^\circ & 0, 1, 2, 4, 5, 9 &	\angle P_1 P_5 P_6  &  8.4960^\circ & 0, 3, 8, 9\\
\angle P_0 P_7 P_9  &  2.2356^\circ & 1, 2, 3, 5, 6, 8 &	\angle P_4 P_8 P_9  &  8.4985^\circ & 2\\
\angle P_4 P_7 P_8  &  2.6813^\circ & 0, 1, 2, 5, 6 &	\angle P_6 P_8 P_7  &  9.0239^\circ & 5\\
\angle P_1 P_7 P_9  &  2.7460^\circ & 2, 3, 5, 6, 8 &	\angle P_6 P_9 P_8  &  9.2179^\circ & 2, 5\\
\angle P_0 P_1 P_4  &  2.9012^\circ & 2, 3, 6, 7, 9 &	\angle P_3 P_6 P_9  &  9.2371^\circ & 0, 1, 2\\
\angle P_3 P_8 P_9  &  3.4730^\circ & 0, 1, 2, 4, 5 &	\angle P_0 P_2 P_3  &  9.2726^\circ & 5, 7, 8, 9\\
\angle P_1 P_0 P_9  &  4.3740^\circ & 2, 3, 5, 6 &	\angle P_2 P_4 P_3  &  9.2903^\circ & 5, 6, 7, 8, 9\\
\angle P_3 P_7 P_9  &  4.4262^\circ & 2, 5, 6 &	\angle P_0 P_3 P_4  &  9.2913^\circ & 6, 7, 8, 9\\
\angle P_6 P_9 P_7  &  4.4810^\circ & 5, 8 &	\angle P_2 P_1 P_5  &  9.2916^\circ & 0\\
\angle P_0 P_7 P_8  &  4.7256^\circ & 2, 5, 6 &	\angle P_1 P_2 P_4  &  9.2917^\circ & 7, 8, 9\\
\angle P_6 P_4 P_7  &  4.8729^\circ & 0, 1, 3, 5 &	\angle P_2 P_7 P_9  &  9.2919^\circ & 8\\
\angle P_1 P_7 P_8  &  5.2360^\circ & 2, 5, 6 &	\angle P_0 P_5 P_6  &  9.2919^\circ & 3, 8, 9\\
\angle P_5 P_4 P_8  &  5.6920^\circ & 3, 6, 9 &	\angle P_0 P_1 P_7  &  9.2919^\circ & 2, 3, 6\\
\angle P_5 P_3 P_8  &  6.2840^\circ & 0, 1 &	\angle P_6 P_3 P_7  &  9.2919^\circ & 0, 1\\
\angle P_5 P_9 P_7  &  6.3125^\circ & 2, 8 &	\angle P_1 P_0 P_3  &  9.2919^\circ & 5, 6\\
\angle P_5 P_3 P_7  &  6.3521^\circ & 0, 1, 2, 4, 6 &	\angle P_3 P_6 P_5  &  9.2919^\circ & 4\\
\end{array}$
\end{table}
\end{center}

The nearest angle to a right angle within every $4$-subset is displayed in Table~\ref{tab:angles}.  For a compact presentation, the table lists several angles with deviations from right angles under $9.292^\circ$.  
The third column gives the indices of fourth points which form $4$-subsets with those of the angle and for which that angle achieves the minimum deviation from $90^\circ$.  In total, all $\binom{10}{4}$ quadruples of points contain one of the listed angles.
\end{proof}

It unfortunate that our configuration $P_0,P_1,\dots,P_9$ lacks any obvious structure or visual appeal.  
On the other hand, it is interesting that many of the angles near maximum deviation arise from chains of consecutive segments. Two such chains are indicated in Figure~\ref{fig:config10}.  This possibly hinders  the ability to make na\"ive local improvements through small perturbations.
  
\section{Bounds for general $k$ and large $n$}
\label{sec:large}

In this section, we consider $\Gamma_k(n)$ for a general $k$ and large $n$.  
As a warm-up, we first sketch a simple argument to show that $\Gamma_k(n)$ tends to $90^\circ$.  Fix a positive integer $m$ and create $m$ equal-sized angular bins of width $180^\circ/m$.  Take $n \ge R_m(k)$, the $m$-color Ramsey number guaranteeing a monochromatic $k$-clique.  Color the $\binom{n}{2}$ segments between points according to their angle relative to (say) the positive $x$-axis.  By choice of $n$, there exists a $k$-subset of points with all angles between segments in the same bin.  This shows $90^\circ-\Gamma_k(n) \le 180^\circ/m$ for sufficiently large $n$. 

The above shows that, for large $n$, some $k$-subset will have all triangles nearly degenerate.  For this reason, it is convenient to introduce $\Delta_k(n):=90^\circ-\Gamma_k(n)$, where a value near $0$ corresponds with degenerate $k$-subsets.  
We give upper and lower bounds, showing that $\Delta_k(n)$ is roughly on the order of $\log(k)/\log(n)$.

With only a little bit of work, we can improve on the size of $n$ in the upper bound, replacing the Ramsey number with an exponential in $m$.

\begin{prop}
\label{prop:lb-general}
For a constant $C$ and $n \ge k \ge 3$, we have 
$\Delta_k(n) \le C\log(k)/\log(n)$.
\end{prop}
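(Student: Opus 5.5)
The plan is to replace the Ramsey argument of the warm-up by an Erdős–Szekeres type argument for several partial orders at once. That argument only needs $n$ exponential in $m$ with base about $k$, which gives exactly $\Delta_k(n)=O(\log k/\log n)$.

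Fix an integer $m\ge 3$ and split the directions $[0^\circ,180^\circ)$ into $m$ half-open bins $I_j=[j\cdot 180^\circ/m,(j+1)\cdot 180^\circ/m)$ for $j=0,\dots,m-1$. Let $K_j\subset\R^2$ be the set of nonzero vectors whose polar angle lies in $I_j$. Since its angular width is $180^\circ/m<180^\circ$, $K_j$ is closed under addition and contains no pair $v,-v$. Hence the relation $P<_j Q \iff \vec{PQ}\in K_j$ is a strict partial order on $S$: transitivity comes from $K_j+K_j\subseteq K_j$, and antisymmetry from $K_j\cap(-K_j)=\emptyset$. For any two distinct points $P,Q$, exactly one of $\vec{PQ}$ and $\vec{QP}$ has polar angle in $[0^\circ,180^\circ)$. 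That vector lies in some $K_j$, so every pair is comparable in at least one of the $m$ orders.

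Next comes the multidimensional Erdős–Szekeres step. Assign to each $P\in S$ the label $(\ell_0(P),\dots,\ell_{m-1}(P))$, where $\ell_j(P)$ is the length of the longest $<_j$-chain ending at $P$. If $P<_j Q$ then $\ell_j(Q)>\ell_j(P)$, so comparability of every pair makes the labels pairwise distinct. Therefore, if $n\ge (k-1)^m+1$, some coordinate reaches $k$, and there is a chain $T$ of length $k$ in one order $<_j$.

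Now I check that such a $T$ is nearly degenerate. For distinct $A,B,C\in T$, each of $\vec{BA}$ and $\vec{BC}$ lies in $K_j$ or in $-K_j$. Two vectors from the same set make an angle less than $180^\circ/m$. Vectors from opposite sets make an angle greater than $180^\circ-180^\circ/m$. In both cases $|\angle ABC-90^\circ|>90^\circ-180^\circ/m$, so $\Delta_k(n)\le 180^\circ/m$.

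To finish, given $n\ge k\ge 3$, take $m=\lfloor \log(n-1)/\log(k-1)\rfloor$, which guarantees $(k-1)^m+1\le n$. If $m\ge 3$, then $m\ge \tfrac12\log(n-1)/\log(k-1)$, and this gives $\Delta_k(n)\le 360^\circ\log(k-1)/\log(n-1)\le C\log k/\log n$. The comparisons between $\log(n-1)$ and $\log n$, and between $\log(k-1)$ and $\log k$, cost only constant factors because $n\ge k\ge3$. If $m\le 2$, then $\log n<C'\log k$, and the trivial bound $\Delta_k(n)\le 90^\circ$ is absorbed by choosing $C$ large.

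The only delicate point is confirming that the cone relation really is a partial order and that each pair is covered. This is why I use half-open bins of width strictly below $180^\circ$ and require $m\ge 3$; otherwise the argument is routine.
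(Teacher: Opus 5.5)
Your proof is correct and is essentially the paper's argument. Both label each point by the lengths of its longest chains in each of $m$ angular bins, observe that the labels must be distinct elements of $\{1,\dots,k-1\}^m$, and conclude $\Delta_k(n)\le 180^\circ/m$ once $n\ge (k-1)^m+1$. The main difference is that you define the orders directly by the convex cones $K_j$, which makes transitivity automatic. The paper instead sorts the points by $x$-coordinate, uses chains of consecutive segments lying in one bin, and appeals to convexity; taken literally, that step needs the bins chosen so that none straddles the vertical direction. Your explicit choice of $m$ also makes the final constant bookkeeping precise.
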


\begin{proof}
Let $m \in \Z$, $m \ge 2$.
Consider a set of $n$ points $P_1,\dots,P_n$ in the plane, where $n \ge (k-1)^m+1$.  We may assume they are listed in order of increasing $x$-coordinate.  As above, partition the angles of segments $P_iP_j$, $i<j$, into $m$ bins, each of width $180^\circ/m$.  We claim there exists a $k$-term subsequence $P_{i_1},P_{i_2},\dots, P_{i_k}$  with segments $P_{i_1}P_{i_2}, \dots, P_{i_{k-1}}P_{i_k}$ in the same bin. To see this, give each point $P_i$ a label $\ell(P_i) \in \N^m$, where the $b$th coordinate of $\ell(P_i)$ is the longest chain of a subsequence of points ending at $P_i$ in which all segments between consecutive elements belong to bin $b$.  If any such coordinate is $k$ or more, we are done.  So we may assume $\ell(P_i) \in \{1,\dots,k-1\}^m$ for each $i$.  By choice of $n$, some two points, say $P_i,P_j$ with $i<j$, have the same label.  The segment $P_iP_j$ belongs to some bin, say the $b$th, but this gives the contradiction $\ell(P_j)_b > \ell(P_i)_b$.  Now, with segments $P_{i_1}P_{i_2}, \dots P_{i_{k-1}}P_{i_k}$ in the same angular bin, it follows from convexity that all segments in our subsequence are in this bin.  Similar to the proof of Proposition~\ref{prop:lb10}, all $3\binom{k}{3}$ angles are within $180^\circ/m$ of either $0$ or $180^\circ$.  This shows 
$\Delta_k(n) \le 180^\circ/m \le C\log k/\log n$ for a constant $C$.
\end{proof}

Next, we sketch a simple construction for a lower bound on $\Delta_k(n)$.  Let $S$ be a set of $n$ equally-spaced points on a circle with center $O$.  For any $k$-subset $T \subset S$, there exist two element $A,C \in T$ for which $\angle AOC \ge \frac{(k-1)}{n}360^\circ$.  Then, choosing some $B \in T$ gives $\frac{k-1}{n}180^\circ \le \angle ABC \le (1-\frac{k-1}{n})180^\circ$, and this shows $\Delta_k(n) \ge \frac{k-1}{n} 180^\circ$.  

To improve on this, we turn to a construction of Szekeres \cite{Szekeres41}.  
When converted to our notation, this construction originally gave a nearly optimal lower bound on $\Delta_3(n)$.  We adapt the method to do a bit better for $\Delta_k(n)$ and essentially match the upper bound given earlier in Proposition~\ref{prop:lb-general}.

\begin{prop}
\label{prop:ub-general}
For a constant $C'$ and $n \ge k \ge 3$, we have 
$\Delta_k(n) \ge C' \log(k)/\log(n)$.
\end{prop}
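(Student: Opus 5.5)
We need a configuration of $n$ points in which every $k$-subset contains an angle within $90^\circ - \delta$ of a right angle, with $\delta = C'\log k/\log n$. Equivalently, every $k$-subset must contain three points $A,B,C$ with $\delta \le \angle ABC \le 180^\circ-\delta$. The plan is to adapt Szekeres' recursive construction. We replace the two-point base with a $(k-1)$-point base, so that branching $k-1$ ways per level forces, by pigeonhole, two chosen points into a common sub-cluster at every level.

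\emph{Construction.} Fix $\delta$ small, to be chosen with $m$ levels. Start with a regular $(k-1)$-gon inscribed in a unit circle, or more generally $k-1$ points in convex position with all angles among triples at least a fixed constant $c_0>0$ away from $0$ and $180^\circ$. Define $S_1$ to be these $k-1$ points. Recursively, $S_{j+1}$ replaces each point of this base pattern with a tiny copy of $S_j$. Szekeres' approach instead squashes the copy by an affine map along a direction, rather than simply scaling it; this is the key to getting $\log$ rather than $\log\log$ behaviour. Concretely, I will place the $k-1$ copies of $S_j$ along a curve, with the copy compressed in the direction of the chord, so that angles in the copy are only mildly distorted. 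Then $|S_m| = (k-1)^m$, so taking $m \approx \log n/\log(k-1)$ gives $n$ points.

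\emph{Key claim.} We want every triangle in $S_m$ to have all angles at least $\delta$ and at most $180^\circ-\delta$, except that we only need this for triples arising in a $k$-subset. Any $k$-subset $T$ of $S_m$ has two points in the same top-level copy, since there are $k-1$ copies. Descend the hierarchy to the deepest level at which $T$ still meets at least two sub-copies of the current cluster. Either $T$ meets three different sub-copies there, giving a triangle that is a slight perturbation of a base triangle with angles in $[c_0, 180^\circ-c_0]$, or it meets exactly two sub-copies. In the latter case we get a triple $A,B$ in one sub-copy and $C$ in another, or we recurse. Since $|T| = k > k-1$, I will arrange that a pigeonhole argument yields a triple whose points separate at two consecutive levels. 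The angle of such a triple is controlled by the angular distortion accumulated per level, roughly $\delta$ per level if the shrink factors are chosen as in Szekeres. The requirement is then that the total distortion over $m$ levels stays below $90^\circ$. This gives $\delta \gtrsim c/m = C'\log k/\log n$.

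\emph{Main obstacle.} The hard part is the two-sub-copy case. There, the direction from one sub-copy to another must make an angle of at least $\delta$ with the internal segments of a sub-copy at the next level. Scaling alone fails, because an internal segment can be parallel to the joining segment. Szekeres fixes this by rotating each level by about $\delta$ relative to the previous one, so that segment directions at level $j$ lie in a window of width about $j\delta$. I would first try exactly this: put the base points on an arc of angular width $\delta$, so that consecutive levels have disjoint direction windows tilted by $\delta$. The difficulty is checking that rotations don't wrap past $180^\circ$, which forces $m\delta \le 180^\circ$, and that with $k-1$ points per level the within-level angles remain at least $\delta/k$-ish. I expect to lose only a constant factor there, since $\log k$ absorbs it. For $n < k^{2}$, the trivial circle bound from the preceding paragraph already gives the claim with a suitable $C'$.
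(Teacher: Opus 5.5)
Your overall route is viable, and it genuinely differs from the paper's. A $(k-1)$-ary hierarchy of depth about $\log n/\log(k-1)$ gets the factor $\log k$ from the reduced depth. The paper instead keeps Szekeres' binary configuration with $\log_2 n$ levels and extracts $\log k$ by finding two points that agree on about $\log_2 k$ bits below the top split.

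As written, however, there is a genuine gap in the branch where $T$ meets three or more sub-copies at its splitting level. There you appeal to a base of $k-1$ points whose triangles all have angles a fixed $c_0$ away from $0^\circ$ and $180^\circ$. No such base exists for large $k$:
\begin{itemize}
\item Three consecutive vertices of a regular $(k-1)$-gon already form an angle $180^\circ-360^\circ/(k-1)$.
\item By the classical minimax-angle theorem, any $2^t+1$ points span an angle of at least $180^\circ(1-1/t)$.
\end{itemize}
If $T$ lies in the copies of three consecutive vertices, this branch certifies only $\delta\lesssim 360^\circ/(k-1)$. That is far below $C'\log k/\log n$ unless $\log n\gtrsim k\log k$.

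The thin base that your two-sub-copy case forces on you (points on an arc of width $\delta$) is worse. Every triple from three distinct sub-copies is then nearly collinear, with small angles of order $\delta/k$. A factor $1/k$ is not a constant loss that $\log k$ can absorb. So your two branches demand incompatible bases.

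Separately, your small-$n$ patch is false. The circle gives $180^\circ(k-1)/n$, which for $n=k^{3/2}$ is about $180^\circ/\sqrt{k}$. The target $C'\log k/\log n=\tfrac23C'$ is a constant.

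The repair is to never use a three-sub-copy triangle.
\begin{itemize}
\item At the splitting level the $k$ points of $T$ occupy at most $k-1$ sub-copies. So two of them, $A$ and $B$, share a sub-copy, while some $C\in T$ lies in another.
\item Take collinear bases, i.e.\ $P_{\mathbf a}=\sum_{j=0}^{m-1}a_jR^je^{i\pi j/m}$ with $\mathbf a\in\{0,\dots,k-2\}^m$. This is the $(k-1)$-ary analogue of the paper's formula.
\item Let $J$ be the splitting index and $H<J$ the highest index where $A$ and $B$ differ. Then $\overrightarrow{AC}$ and $\overrightarrow{AB}$ are within $O(k/R)$ of $\pm e^{i\pi J/m}$ and $\pm e^{i\pi H/m}$ respectively.
\item So $\angle BAC$ is within $O(k/R)$ of $180^\circ(J-H)/m$ or its supplement. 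Both lie in $[180^\circ/m,\,180^\circ-180^\circ/m]$ because $1\le J-H\le m-1$.
\end{itemize}
No consecutive levels, direction windows, accumulated distortion or affine squashing are needed.

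Now take $m=\lceil\log n/\log(k-1)\rceil$, which is at least $2$ since $n>k-1$, and discard surplus points; subsets inherit the property. This gives $\Delta_k(n)\ge 180^\circ/m\ge 90^\circ\log(k-1)/\log n$ for all $n\ge k\ge 3$, with no separate small-$n$ case. For $k=4$, $m=2$ it is the paper's grid $S_N$ up to rotation. Compared with the paper, this version needs only the trivial pigeonhole and gives an explicit constant.
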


\begin{proof}
We review the construction of Szekeres from \cite{Szekeres41}.  First, let $n=2^t$ for a positive integer $t$ and index points $P_{\mathbf{v}}$ by binary vectors $\mathbf{v}=(v_0,v_1,\cdots, v_{t-1}) \in \{0,1\}^t$.  In the complex plane, we let
$$P_{\mathbf{v}}:=\sum_{j=0}^{t-1} v_j R^{j} e^{i\pi j/t}.$$
Consider any $k$-subset $K \subset \{0,1\}^t$. Let $J$ denote the maximum index in which any two vectors of $K$ disagree, and say $\mathbf{v}$, $\mathbf{v}' \in K$ with $v_J \neq v'_J$. Without loss of generality, $v_J=1$ and there is a subset $K' \subset K$ with $|K'| \ge \lfloor k/2 \rfloor$ such that $w_J=0$ for all $\mathbf{w} \in K'$.  By the pigeonhole principle, some two elements of $K'$ agree in at least $\log_2 |K'|$ bits to the left of $J$.  So take binary vectors 
$\mathbf{u},\mathbf{w} \in K'$ with this property, agreeing (say) in positions $H+1,\dots,J$.  Consider the triangle $\triangle P_\mathbf{u}P_\mathbf{v}P_\mathbf{w}$.  For large $R$, the approximate directions formed by the sides $P_\mathbf{u}P_\mathbf{v}$, $P_\mathbf{v}P_\mathbf{w}$ and $P_\mathbf{u}P_\mathbf{w}$ are, respectively, $e^{i \pi J/t}$, $-e^{i \pi J/t}$, and $\pm e^{i \pi H/t}$. But $J-H$ is at least order $\log(k)$ and $t$ is at most order $\log(n)$. It follows that, in $\triangle P_\mathbf{u}P_\mathbf{v}P_\mathbf{w}$, the angle at
either $P_\mathbf{u}$ or $P_\mathbf{w}$ (depending on sign) is at most $180^\circ-C'\log(k)/\log(n)$ for a constant $C'$. 

Finally, for $2^{t-1} < n < 2^t$ we note that the construction can be applied with $2^t \le 2n$ points, and the bound only changes by a constant multiple.
\end{proof}

Constructions toward a lower bound on $\Delta_3(n)$ in the range $2^{t-1} < n < 2^t$ are an interesting topic in their own right.  Sendov \cite{Sendov95} obtains some partial results on this problem.

\section{Conclusion and Discussion}
\label{sec:conclusion}

We introduced a quantitative refinement of Erd\H{o}s and Silverman's problem, making some initial observations on $\Gamma_k(n)$, the angular deviation from right angles in $k$-subsets among $n$ points in the plane.  We obtained a rate of approach to degeneracy on the order of $\log(k)/\log(n)$.  There remains some work on the constants in this analysis.  Looking forward from our result that $\Gamma_4(10) \in [4^\circ,9.292^\circ)$, it seems fairly likely that both ends can be tightened. Improvements on either side, including a structural classification of the extremal configurations, would be a worthwhile next step in the research.

Computationally-generated configurations for the next smallest values $n=11,12,\dots$ might give valuable insight on both structure and the approach to degeneracy.  After only a few annealing runs on a $100 \times 100$ lattice, we obtained $\Gamma_4(11) < 16^\circ$.  Points achieving this are $$
(15, 14),
 (21, 10),
 (31, 5),
 (36, 79),
 (36, 80),
 (61, 78),
 (62, 73),
 (74, 61),
 (85, 90),
 (89, 85),
 (93, 72).$$
We also found perturbations via gradient descent that push the upper bound on $\Gamma_4(11)$ slightly under $15^\circ$.

We are hopeful that $r(k)$ in the original problem on right angles can be better understood through a careful analysis of $\Gamma_k(n)$ for $n$ on the order of $k^2$.  To this end, a variant of our problem that imposes some minimum relative separation between points may be useful to explore.  
Our construction showing $\Gamma_4(9)=0$ relied on clusters of essentially identical points in the limit.  Although our construction upper-bounding $\Gamma_4(10)$ has two points $P_0,P_1$ quite close together, many computer-generated examples at a slightly worse bound of around $10^\circ$ had larger separation between points.

A Tur\'an-type result on the maximum number of near right-angled triangles on $n$ points in the plane is given in \cite[Theorem 1.2(a)]{BCD25}. This work considers other triangles, including equilateral and isosceles.  A Ramsey version in these other cases may be interesting to explore.

Finally, it is reasonable to ask about similar problems on point sets in $\R^d$, or some other space such as $\F_q^d$.  Angles in the latter setting essentially correspond with Hamming distance. The paper \cite{BMPS23} studies right-angle-free subsets in finite fields and the connection to coding theory.

\section*{Acknowledgements}

Research of Peter Dukes is 
supported by NSERC Discovery Grant RGPIN-312595-2023.  In preparing this work, Claude (Anthropic) was used for assistance with coding and mathematical discussion. Thanks are also given to Felix Clemen, who identified the Szekeres construction as a good candidate for this problem.

%%%%%%%%%%%%%%%%

\end{document}